\tikzset{
    ncbar angle/.initial=90,
    ncbar/.style={
        to path=(\tikztostart)
        -- ($(\tikztostart)!#1!\pgfkeysvalueof{/tikz/ncbar angle}:(\tikztotarget)$)
        -- ($(\tikztotarget)!($(\tikztostart)!#1!\pgfkeysvalueof{/tikz/ncbar angle}:(\tikztotarget)$)!\pgfkeysvalueof{/tikz/ncbar angle}:(\tikztostart)$)
        -- (\tikztotarget)
    },
    ncbar/.default=0.5cm,
}
\tikzset{square left brace/.style={ncbar=-0.1cm}}
\tikzset{square right brace/.style={ncbar=0.1cm}}
\tikzset{round left paren/.style={ncbar=0.5cm,out=110,in=-110}}
\tikzset{round right paren/.style={ncbar=0.5cm,out=-70,in=70}}
\newtheoremstyle{slplain}
  {\topsep}
  {\topsep}
  {\slshape}
  {0pt}
  {\bfseries}
  {.}
  {0.5em}
  {}
\theoremstyle{slplain}
  \newtheorem{THM}{Theorem}[section]
  \newtheorem{LEM}[THM]{Lemma}
  \newtheorem{PROP}[THM]{Proposition}
  \newtheorem{COR}[THM]{Corollary}
\theoremstyle{definition}
\newcommand{\End}{\operatorname{End}}
\newcommand{\Clo}{\operatorname{Clo}}
\newcommand{\Pol}{\operatorname{Pol}}
\newcommand{\ar} {\operatorname{ar}}
\newcommand{\ord}{\operatorname{ord}}
\newcommand{\type}{\operatorname{type}}
\let\le\leqslant
\let\leq\leqslant
\let\ge\geqslant
\let\geq\geqslant
\let\phi\varphi
\let\emptyset\varnothing
\newcommand{\pair}[2]{(#1,#2)}
\newcommand{\tuple}[1]{(#1)}
\title{On $k$-ary parts of maximal clones}
\author{
Dragan Ma\v{s}ulovi\'c, Maja Pech\\ Department of
Mathematics and Informatics\\ Faculty of Sciences, University of Novi Sad, Serbia\\
Trg Dositeja Obradovi\'ca 3, 21000 Novi Sad\\
e-mail: \{masul,maja\}@dmi.uns.ac.rs
}
\date{}
\begin{document}
\maketitle

\begin{abstract}
	The main problem of clone theory is to describe the clone lattice for a given basic set. For a two-element basic set this was resolved by E.L. Post, but for at least three-element basic set the full structure of the lattice is still unknown, and the complete description in general is considered to be hopeless. Therefore, it is studied by its substructures and its approximations. One of the possible directions is to examine $k$-ary parts of the clones and their mutual inclusions.\\ In this paper we study $k$-ary parts of maximal clones, for $k\geq2$, building on the already known results for their unary parts. It turns out that the poset of $k$-ary parts of maximal clones defined by central relations contains long chains.

  \medskip

  \noindent
  \textbf{Key Words and Phrases:} maximal clones, endomorphism monoids, central relations

  \medskip

  \noindent
  \textbf{MSC (2020):} 08A35; 06A06
\end{abstract}

\section{Preliminaries}

Throughout the paper we assume that $A$ is a finite set and
$|A|\geq 3$. Let $O_A^{(n)}$ denote the set of all $n$-ary
operations on $A$ (so that $O_A^{(1)}=A^A$) and let
$O_A:=\bigcup_{n\geq 1} O_A^{(n)}$ denote the set of all finitary
operations on $A$. For $F\subseteq O_A$ let $F^{(n)}:=F\cap
O_A^{(n)}$ be the set of all $n$-ary operations in $F$. A set
$C\subseteq O_A$ of finitary operations is a {\it clone of
operations on $A$} if it contains all projection maps ${\pi}^n_i:
A^n\rightarrow A: (x_1,\dots ,x_n)\mapsto x_i$ and is closed with
respect to composition of functions in the following sense:
  whenever $g\in C^{(n)}$ and $f_1,\dots ,f_n\in C^{(m)}$ for some
  positive integers $m$ and $n$ then $g(f_1,\dots ,f_n)\in C^{(m)}$,
  where the composition $h:=g(f_1,\dots ,f_n)$ is defined by
  $h(x_1,\dots ,x_m):= g(f_1(x_1,\dots ,x_m),\dots ,f_n(x_1,\dots
  ,x_m))$.

Clearly, for any family $(C_i)_{i\in I}$ of clones on $A$ we have that $\bigcap_{i\in I} C_i$ is a clone, too. Therefore, for any $F\subseteq O_A$ it makes sense to define $\Clo (F)$ to be the smallest clone that contains $F$.

We say that an $n$-ary operation $f$ preserves an $h$-ary relation
$\varrho$ if the following holds:
\[
\left [ \begin{array}{@{}c@{}} a_{11}\\ a_{21}\\ \vdots\\ a_{h1}
\end{array}\right],
\left [ \begin{array}{@{}c@{}} a_{12}\\ a_{22}\\ \vdots\\ a_{h2}
\end{array}\right],
\dots, \left [ \begin{array}{@{}c@{}} a_{1n}\\ a_{2n}\\ \vdots\\ a_{hn}
\end{array}\right]\in \varrho \mbox{ implies }
\left [ \begin{array}{@{}c@{}} f(a_{11},a_{12},\dots ,a_{1n})\\
f(a_{21},a_{22},\dots ,a_{2n}) \\ \vdots\\  f(a_{h1},a_{h2},\dots
,a_{hn})
\end{array}\right]\in \varrho.
\]
For a set $Q$ of relations let
\begin{eqnarray*}
\Pol Q &:=& \{ f\in O_A: f\mbox{ preserves every } \varrho\in
Q\}.
\end{eqnarray*}
Let $\Pol_n Q =(\Pol Q)\cap O_A^{(n)}$. For an $h$-ary relation
$\theta\subseteq A^h$ and a unary operation $f\in A^A$ it is
convenient to write
\[
  f(\theta ):= \{ \tuple{f(x_1),\dots , f(x_h)} : \tuple{x_1,\dots ,x_h}
  \in \theta \}.
\] Then clearly $f$ preserves $\theta$ if and only if $f(\theta
)\subseteq \theta$. It follows that $\Pol_1 Q$ is the
endomorphism monoid of the relational structure $\pair AQ$.
Therefore instead of $\Pol_1 Q$ we simply write $\End
Q$.  

If the underlying set is finite and has at least three elements,
then the lattice of clones has cardinality $2^{{\aleph}_0}$.
However, one can show that the lattice of clones on a finite set
has a finite number of coatoms, called {\it maximal clones}, and
that every clone distinct from $O_A$ is contained in one of the
maximal clones. One of the most influential results in clone
theory is the explicit characterization of the maximal clones,
obtained by I.\ G.\ Rosenberg as the culmination of the work of
many mathematicians. It is usually stated in terms of the
following six classes of finitary relations on $A$ (the so-called
{\it Rosenberg relations}).

\begin{enumerate}[label=(R\arabic*), ref=(R\arabic*)]
\item\label{r1} {\it Bounded partial orders.} These are partial orders
on $A$ with a least and a greatest element.

\item\label{r2} {\it Nontrivial equivalence relations.} These are
equivalence relations on $A$ distinct from ${\Delta}_A:=\{
\pair xx : x\in A\}$ and $A^2$.

\item\label{r3} {\it Permutational relations.} These are relations of
the form $\{ \pair{x}{\pi (x)} : x\in A\}$ where $\pi$ is a
fixpoint-free permutation of $A$ with all cycles of the same
length $p$, where $p$ is a prime.

\item\label{r4} {\it Affine relations.} For a binary operation
$\oplus$ on $A$ let
\[{\lambda}_{\oplus}:=\{ \tuple{x,y,u,v} \in A^4 :
x\oplus y=u\oplus v\} .\]

A relation $\varrho$ is called {\it affine} if there is an
elementary abelian $p$-group $\tuple{A,\oplus ,\ominus ,0}$
on $A$ such that $\varrho={\lambda}_{\oplus}$.

Suppose now that $A$ is an elementary abelian $p$-group. Then it
is well-known that $f\in \Pol \{ {\lambda}_{\oplus}\}$ if and only
if \[f(x_1\oplus y_1,\dots ,x_n\oplus y_n)=f(x_1,\dots ,x_n)\oplus
f(y_1,\dots , y_n)\ominus f(0,\dots ,0)\]

for all $x_i,y_i\in A$. In case $f$ is unary, this condition
becomes \[f(x\oplus y)=f(x)\oplus f(y)\ominus f(0).\]

\item\label{r5} {\it Central relations.} All unary relations are
central relations. For central relations $\varrho$ of arity $h\geq
2$ the definition is as follows: $\varrho$ is said to be {\it
totally symmetric} if $\tuple{x_1,\dots ,x_h}\in \varrho$ implies
$\tuple{x_{\pi (1)},\dots ,x_{\pi (h)}}\in \varrho$ for all permutations
$\pi$, and it is said to be {\it totally reflexive} if
$\tuple{x_1,\dots,x_h}\in \varrho$ whenever there are $i \ne j$ such that
$x_i=x_j$. An element $c\in A$ is {\it central} if  $\tuple{c,x_2,\dots
,x_h} \in \varrho$ for all $x_2,\dots ,x_h\in A$. Finally,
$\varrho\not= A^h$ is called {\it central} if it is totally
reflexive, totally symmetric and has a central element. According
to this, every central relation $\varrho$ can be written as $C_{\varrho}\cup R_{\varrho}\cup T_{\varrho}$,
where $C_{\varrho}$ consists of all the tuples of distinct elements
containing at least one central element (the central part), $R_{\varrho}$
consists of all the tuples $\tuple{x_1,\dots ,x_h}$ such that there are
$i\not= j$ with $x_i=x_j$ (the reflexive part) and $T_{\varrho}$ consists of
all the tuples $\tuple{x_1,\dots ,x_h}$ such that $x_1,\dots ,x_h$ are
distinct non-central elements. Let $Z_\varrho$ denote the set
of all central elements of $\varrho$.

\item\label{r6} {\it $h$-regular relations.} Let $\Theta =\{
{\theta}_1,\dots ,{\theta}_m\}$ be a family of equivalence
relations on the same set $A$. We say that $\Theta$ is an {\it $h$-regular family} if
every ${\theta}_i$ has precisely $h$ blocks, and additionally, if
$B_i$ is an arbitrary block of ${\theta}_i$ for $i\in \{ 1,\dots
,m\}$, then $\bigcap_{i=1}^m B_i\not=\emptyset .$

An $h$-ary relation $\varrho\not= A^h$ is $h$-regular if $h\geq 3$
and there is an $h$-regular family $\Theta$ such that $\tuple{x_1,\dots
,x_h}\in \varrho$ if and only if for all $\theta\in \Theta$ there
are distinct $i$, $j$ with $x_i \theta x_j$. Clearly, $\varrho$ is completely determined by its $h$-regular family $\Theta$. Therefore, we will also denote it by $R_{\Theta}$.

Note that regular relations are totally reflexive and totally
symmetric.
\end{enumerate}

\begin{THM}[Rosenberg \cite{Rosen}]
  A clone $M$ of operations on a finite set is maximal if and only
  if there is a relation $\varrho$ from one of the classes
  \ref{r1}--\ref{r6} such that $M=\Pol\{\varrho\}$.
\end{THM}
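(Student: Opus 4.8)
The plan is to work through the Galois connection $\Pol$--$\Inv$ between operations and relations on the finite set $A$. On a finite set every clone has the form $\Pol Q$ for a suitable set $Q$ of relations, and the map $C\mapsto\Inv C$ is a dual isomorphism from the lattice of clones onto the lattice of relational clones (families of relations closed under intersection, Cartesian product, coordinate permutation and projection, together with the diagonal relations). Under this duality $M$ being a maximal clone, i.e.\ $M\lessdot O_A$, is equivalent to $\Inv M$ being an atom of the lattice of relational clones, that is, a minimal relational clone strictly above the trivial bottom element $\Inv O_A$. I would first record the easy consequence that every such atom is singly generated: if $\varrho$ is any relation in $\Inv M\setminus\Inv O_A$, then the relational clone $\langle\varrho\rangle$ it generates satisfies $\Inv O_A\subsetneq\langle\varrho\rangle\subseteq\Inv M$, so by minimality $\langle\varrho\rangle=\Inv M$ and hence $M=\Pol\{\varrho\}$. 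This reduces the theorem to the purely relational problem of classifying the atoms, i.e.\ the relations $\varrho$ for which $\langle\varrho\rangle$ is minimal nontrivial.

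For the first (maximality) direction I would verify, family by family, that each $\Pol\{\varrho\}$ coming from \ref{r1}--\ref{r6} is indeed maximal. The uniform scheme is: fix an operation $f\notin\Pol\{\varrho\}$ and show $\Clo(\Pol\{\varrho\}\cup\{f\})=O_A$, exploiting the specific combinatorics of $\varrho$ --- the least and greatest elements of a bounded order \ref{r1}, the block structure of an equivalence \ref{r2}, the common cycle length of the permutation \ref{r3}, the linear structure supplied by the elementary abelian group in the affine case \ref{r4}, a central element together with total reflexivity and symmetry \ref{r5}, and the intersection property of the $h$-regular family \ref{r6}. Each case is a completeness proof for the augmented clone: technical, but self-contained and essentially a direct construction.

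The genuinely hard direction --- and the main obstacle --- is exhaustiveness: proving that the six families list \emph{all} atoms, with nothing omitted. Here I would normalize a generating relation $\varrho$ of an arbitrary atom (pass to one of minimal arity inside $\langle\varrho\rangle$, and use symmetrizations and diagonal intersections to force total reflexivity and total symmetry whenever this is possible) and then split into cases according to the resulting combinatorial type. The low-arity and symmetry-rich cases steer towards bounded orders, nontrivial equivalences, permutational relations and central relations; the delicate part is recognizing precisely when a relation forces an elementary abelian group structure (the affine case \ref{r4}) and when total reflexivity and symmetry in the absence of a central element force the $h$-regular pattern \ref{r6}, and then proving that no further minimal types can occur. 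This case analysis --- in particular the isolation of the affine and $h$-regular families and the verification that the list is complete --- is exactly the content of Rosenberg's theorem \cite{Rosen}, and reconstructing it in full lies far beyond a short sketch; I would import it as the cited result rather than reprove it.
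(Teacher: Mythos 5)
The paper does not prove this theorem: it is stated as an imported classical result with a citation to Rosenberg, and the entire paper builds on it as a black box. So there is no internal proof to compare yours against.

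Your proposal is an accurate roadmap of how Rosenberg's theorem is actually proved --- the reduction via the $\Pol$--$\Inv$ Galois connection to classifying the atoms of the lattice of relational clones, the observation that each atom is generated by a single relation, the family-by-family completeness arguments showing each $\Pol\{\varrho\}$ with $\varrho$ from \ref{r1}--\ref{r6} is maximal, and the case analysis for exhaustiveness. But as written it is a proof \emph{plan}, not a proof: the exhaustiveness direction, which you correctly identify as the genuinely hard part and which constitutes essentially all of the mathematical content of the theorem, is explicitly deferred to the citation rather than carried out. The same is true, to a lesser extent, of the six completeness proofs in the maximality direction, each of which is a nontrivial argument (the affine and $h$-regular cases in particular) that you describe but do not execute. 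Deferring to \cite{Rosen} is exactly what the paper itself does, and is the right call in context --- no one reproves Rosenberg's theorem in a paper that merely uses it --- but one should be clear that what you have written establishes the statement only modulo the reference, not independently. One small point worth making explicit in your reduction: to conclude that maximality of $M$ corresponds to $\Inv M$ being an atom, you are invoking the fact that on a finite set $\Pol$--$\Inv$ restricts to a dual lattice isomorphism between clones and relational clones (the Bodnarchuk--Kaluzhnin--Kotov--Romov/Geiger theorem); that is itself a cited input and should be flagged as such.
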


\begin{table}
\centering
\let\gc\relax
\newcommand{\fd}[2]{{#1}\vfill{\centerline{#2}}\vfill}
{\scriptsize
\begin{tabular}{|p{12mm}|p{12mm}|p{12mm}|p{12mm}|p{12mm}|p{12mm}|p{12mm}|p{12mm}|}\hline
 \begin{center} {\Large $ \varrho \backslash \sigma$ }\end{center}
 &\vspace{1mm} Bounded partial order  &\vspace{1mm} Equiva\-lence relation &\vspace{1mm}
  Permuta\-ti\-o\-nal relation &\vspace{1mm} Affine relation &\vspace{1mm} Unary central relation &
 \vspace{1mm} $k$--ary central relation,
  $k\geq 2$ &\vspace{1mm} $h$--regular relation\\ \hline
\vspace{1mm}  Bounded partial order & \fd{\large \[-\]}{\cite{MPDM3}}
& \fd{\large
\[-\]}{\cite{MPDM3}} & \fd{\large \[-\]}{\cite{MPDM3}} & \fd{\large \[-\]}{\cite{MPDM3}} & \fd{\large \[-\]}{\cite{MPDM3}} &
\gc{\fd{\large \[+ ?\]}{\cite{MP10}}} & \fd{\large \[+ ?\]}{\cite{MP10}}\\
\hline \vspace{1mm} Equiva\-lence relation & \fd{\large
\[-\]}{\cite{MPDM3}} & \fd{\large \[-\]}{\cite{MPDM3}} & \fd{\large \[-\]}{\cite{MPDM3}} & \fd{\large \[-\]}{\cite{MPDM3}} &
 \fd{\large \[-\]}{\cite{MPDM3}} & \gc{\fd{\large \[-\]}{\cite{MPDM1}}}& \gc{\fd{\large
\[+\]}{\cite{MPDM2}}}\\ \hline \vspace{1mm}  Permuta\-ti\-o\-nal relation &
\fd{\large \[-\]}{\cite{MPDM3}} & \fd{\large \[+\]}{\cite{MPDM3}} &
\fd{\large \[-\]}{\cite{MPDM3}} & \fd{\large \[+\]}{\cite{MPDM3}} & \fd{\large \[-\]}{\cite{MPDM3}} &
\gc{\fd{\large \[-\]}{\cite{MPDM1}}} & \fd{\large \[+\]}{\cite{MPDM3}}\\
\hline \vspace{1mm} Affine relation & \fd{\large \[-\]}{\cite{MPDM3}}
& \fd{\large \[-\]}{\cite{MPDM3}} & \fd{\large \[-\]}{\cite{MPDM3}} &
\fd{\large \[-\]}{\cite{MPDM3}} & \fd{\large
\[-\]}{\cite{MPDM3}} & \gc{\fd{\large \[-\]}{\cite{MPDM1}}} & \fd{\large \[+\]}{\cite{MPDM3}}\\ \hline
\vspace{1mm} Unary central relation & \gc{\fd{\large
\[-\]}{\cite{MPDM1}}} & \gc{\fd{\large \[+\]}{\cite{MPDM1}}} & \gc{\fd{\large
\[-\]}{\cite{MPDM1}}} & \gc{\fd{\large \[-\]}{\cite{MPDM1}}} & \gc{\fd{\large \[-\]}{\cite{MPDM1}}} &
\gc{\fd{\large \[+\]}{\cite{MPDM1}}} & \gc{\fd{\large \[+\]}{\cite{MPDM1}}}\\
\hline \vspace{1mm}   $k$--ary central relation, $k\geq 2$ &
\gc{\fd{\large
\[-\]}{\cite{MPDM1}}} & \gc{\fd{\large \[+\]}{\cite{MPDM1}}} & \gc{\fd{\large
\[-\]}{\cite{MPDM1}}} & \gc{\fd{\large \[-\]}{\cite{MPDM1}}} & \gc{\fd{\large \[-\]}{\cite{MPDM1}}} &
 \gc{\fd{\large \[+\]}{\cite{MP2}}} &
\gc{\fd{\large \[+\]}{\cite{MP10}}}\\ \hline
\vspace{1mm} $h$--regular relation & \fd{\large \[-\]}{\cite{MPDM3}}
& \fd{\large \[-\]}{\cite{MPDM3}} & \fd{\large \[-\]}{\cite{MPDM3}} & \fd{\large \[-\]}{\cite{MPDM3}}
& \fd{\large \[-\]}{\cite{MPDM3}} & {\fd{\large \[+\]}{\cite{MP10}}} & \gc{\fd{\large \[+\]}{\cite{MP10}}} \\
\hline
\end{tabular}
}
\caption{A summary of the results}
\label{table.overview}
\end{table}

Table~\ref{table.overview} summarizes all known results about the mutual containment of unary parts of maximal clones over a finite set $A$ with $|A|\geq 3$.
The entries in this table are to be interpreted in the following way:
\begin{itemize}
\item
    we write $-$ if $\End\varrho \not\subseteq
    \End\sigma$ for every pair $\pair\varrho\sigma$ of distinct relations
    of the indicated type;

\item
    we write $+$ if there is a complete characterization of the
    situation $\End\varrho \subseteq \End\sigma$;

\item
    we write $+ ?$  if there is a partial characterization of the
    situation $\End\varrho \subseteq \End\sigma$.
\end{itemize}

\section{Binary operations in maximal clones}

The partially ordered set of unary parts of maximal clones ordered by inclusion has a very rich
structure \cite{MPDM1,MPDM2,MPDM3}. Moreover, the main result of \cite{MP2} shows that
every finite Boolean algebra is order-embeddable into the partially ordered set of unary
parts of maximal clones on a sufficiently large finite set. In this section we would like
to consider a similar problem and embark on the investigation of the partially ordered set
$\{\Pol_2 \varrho : \varrho$ is a Rosenberg relation$\}$.

This problem is closely related to the notion of the order of a clone.
For a finitely generated clone $C$, let $\ord(C)$ denote the \emph{order of $C$},
that is, the least positive integer $k$ such that $\Clo(C^{(k)}) = C$.
If $C$ is not finitely generated we set $\ord(C) = \infty$.
It is easy to see that if $C$ is a maximal clone with $\ord(C) = 2$ and $D$ is another
maximal clone then $C^{(2)} \not\subseteq D^{(2)}$, for otherwise we would have
$C = \Clo(C^{(2)}) \subseteq \Clo(D^{(2)}) \subseteq D$, which contradicts the maximality
of~$C$.

\begin{PROP}\label{Pol2.prop.main}
  If $\varrho$ and $\sigma$ are distinct Rosenberg relations such that
  $\Pol_2 \varrho \subseteq \Pol_2 \sigma$
  then both $\varrho$ and $\sigma$ have to be central relations of arity at least~2.
\end{PROP}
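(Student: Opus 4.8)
The plan is to exploit the key observation made just before the statement: many maximal clones have order $2$, so that a containment of their binary parts would force a containment of the whole clones, contradicting maximality. First I would recall (or cite from the clone-theoretic literature) the orders of the maximal clones arising from the six Rosenberg classes. The maximal clones $\Pol\{\varrho\}$ for $\varrho$ a bounded partial order, a nontrivial equivalence relation, a permutational relation, or an affine relation are all finitely generated and, crucially, have order $2$: each is generated by its binary members. The same is true for $h$-regular relations. The one genuinely different case is that of central relations of arity at least $2$, whose associated maximal clones can have order larger than $2$ (indeed the order grows with the arity), and whose binary parts therefore need not determine the whole clone.

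With these orders in hand, the argument is essentially a case analysis ruling out every combination except $(\varrho,\sigma)$ both central of arity $\geq 2$. The main engine is the implication stated in the excerpt: if $\Pol_2\varrho \subseteq \Pol_2\sigma$ and $\ord(\Pol\{\varrho\}) = 2$, then
\[
  \Pol\{\varrho\} = \Clo\bigl((\Pol\{\varrho\})^{(2)}\bigr) \subseteq \Clo\bigl((\Pol\{\sigma\})^{(2)}\bigr) \subseteq \Pol\{\sigma\},
\]
and since $\varrho$ and $\sigma$ are distinct Rosenberg relations the clones $\Pol\{\varrho\}$ and $\Pol\{\sigma\}$ are distinct maximal clones, so this containment contradicts maximality. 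Thus whenever $\varrho$ is of a type whose maximal clone has order $2$ (bounded partial order, equivalence, permutational, affine, or $h$-regular), the containment $\Pol_2\varrho\subseteq\Pol_2\sigma$ is impossible for \emph{any} distinct $\sigma$. This immediately forces $\varrho$ to be a central relation. Note that unary central relations are also excluded here, since their maximal clones are likewise of order $2$.

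It remains to force $\sigma$ to be central of arity $\geq 2$ as well, which is the part requiring a little more care, since we cannot invoke $\ord(\Pol\{\sigma\})=2$ when $\sigma$ itself is central. Here I would argue in the contrapositive: suppose $\varrho$ is central of arity $\geq 2$ and $\sigma$ is of some other type. If $\sigma$ is of a type with $\ord(\Pol\{\sigma\})=2$, I would instead use the symmetric form of the above inequality together with information about which operations lie in $\Pol_2\varrho$ but not in $\Pol_2\sigma$; concretely, I would produce an explicit binary operation preserving $\varrho$ but violating $\sigma$. A clean way to do this is to use that a nontrivial central relation of arity $\geq 2$ admits a rich supply of binary polymorphisms (for instance, operations built from a central element $c$, such as maps that collapse onto $c$ off the diagonal), and to check these fail to preserve the candidate $\sigma$. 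The remaining subcase, $\varrho$ central of arity $\geq 2$ and $\sigma$ a unary central relation, must be handled directly, again by exhibiting a separating binary polymorphism.

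The hard part will be this last step of excluding $\sigma$ of the ``order-$2$'' types and the unary central type while $\varrho$ is central of arity $\geq 2$: unlike the first case, it cannot be dispatched purely by the order argument and needs explicit witnessing operations adapted to each target type $\sigma$. I expect the analysis to be streamlined considerably by reusing the separating examples already constructed in the cited companion papers \cite{MPDM1,MPDM2,MPDM3}, where the unary-part analogues of these non-containments are established; the same or slightly strengthened witnesses should serve at the binary level. The order-based first half, by contrast, I expect to be short and uniform.
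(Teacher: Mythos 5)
There is a genuine gap in the first half of your argument. The order-$2$ fact you want to invoke does not cover bounded partial orders: the result cited in the paper (P\"oschel--Kalu\v{z}nin) gives $\ord(\Pol\varrho)=2$ only for $\varrho$ in the classes \ref{r2}, \ref{r3}, \ref{r4} and \ref{r6}. For a bounded partial order the order of the clone of monotone operations is in general larger than $2$, so the implication
\[
  \Pol\{\varrho\}=\Clo\bigl((\Pol\{\varrho\})^{(2)}\bigr)\subseteq\Clo\bigl((\Pol\{\sigma\})^{(2)}\bigr)\subseteq\Pol\{\sigma\}
\]
is not available, and your claim that the containment $\Pol_2\varrho\subseteq\Pol_2\sigma$ is ``impossible for any distinct $\sigma$'' when $\varrho$ is a bounded partial order is unproved. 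This is exactly where the bulk of the paper's work lies: after the order argument one is only entitled to conclude that $\varrho$ is a bounded partial order \emph{or} a central relation, and the bounded-partial-order case must then be killed by hand. Moreover, for $\sigma$ a central relation of arity $\ge 2$ or an $h$-regular relation one cannot even fall back on the unary-part table, since those entries are ``$+?$'' (containments $\End\varrho\subseteq\End\sigma$ do occur there); the paper constructs explicit monotone binary operations ($f$, $g$ and the truncations $t_{a,b}$ built from the bounds $0$ and $1$) and shows they violate $\sigma$. Your proposal contains no substitute for this step. The same caveat applies, less seriously, to your parenthetical claim that unary central relations are ``likewise of order $2$'': the paper does not use this and instead retains unary central $\varrho$ as a live case through the second half, disposing of it with explicit binary witnesses.

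The second half of your plan (forcing $\sigma$ to be central of arity $\ge 2$ once $\varrho$ is central) is in the right spirit and matches the paper's method: reduce to the unary-part results of the companion papers where the table entry is ``$-$'', and in the remaining cases ($\sigma$ an equivalence relation, $\sigma$ regular, and $\sigma$ at least binary central with $\varrho$ unary central) use the structural constraints those results impose to build a separating binary operation of the form $x*y$ that exploits the central elements of $\varrho$. But as written this is only a sketch; the actual witnesses still have to be produced, and the decisive missing piece remains the bounded-partial-order case above.
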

\begin{proof}
  It is a well-known fact (see \cite{Poe-Kal}) that if $|A| \ge 3$ then $\ord(C) = 2$ for
  all maximal clones $C = \Pol \varrho$ where $\varrho$ belongs to one of the classes
  \ref{r2}, \ref{r3}, \ref{r4} and \ref{r6}. Therefore, if $\Pol_2 \varrho \subseteq \Pol_2 \sigma$
  for some Rosenberg relations $\varrho$ and $\sigma$, then $\varrho$ is a
  bounded partial order or a central relation.

  \bigskip

  \textsc{Step 1.}
  Let $\varrho$ be a bounded partial order with the least element $0$ and the greatest element $1$.
  If $\sigma$ belongs to one of the classes \ref{r1}, \ref{r2}, \ref{r3}, \ref{r4} or if $\sigma$ is
  a unary central relation, then $\End \varrho \not\subseteq \End\sigma$
  (see Table~\ref{table.overview}), and hence $\Pol_2 \varrho \not\subseteq \Pol_2 \sigma$.

  Let $\sigma$ be a central relation of arity $k \ge 2$ and consider the following three
  binary operations on $A$:
  \[
    f(x, y) = \begin{cases}
      x, & \text{if }y = 1,\\
      y, & \text{if }x = 1,\\
      0, & \text{otherwise},
    \end{cases}
    \mbox{\qquad}
    g(x, y) = \begin{cases}
      x, & \text{if }y = 0,\\
      y, & \text{if }x = 0,\\
      1, & \text{otherwise},
    \end{cases}
  \]
  \[
    \text{and\quad}
    t_{a,b}(x, y) = \begin{cases}
      0, & \text{if }(x, y) \le (a, b),\\
      x, & \text{otherwise}.
    \end{cases}
  \]
  All three operations are monotonous with respect to $\varrho$ and $f(1, x) = f(x, 1) = g(0, x) =
  g(x, 0) = x$. If $1 \in Z_\sigma$, take any $\tuple{x_1, x_2, \ldots, x_k} \notin \sigma$
  and note that
  \[
    \begin{array}{clll@{\,}l}
      (\; 1, & x_2, & \ldots, & x_k)   &\in \sigma\\
      ( x_1, & 1, & \ldots, & 1\;)     &\in \sigma\\
      \llap{$f:\,$}\downarrow & \downarrow & \ldots & \downarrow\\
      ( x_1, & x_2, & \ldots, & x_k) &\notin \sigma.
    \end{array}
  \]
  Thus, $f$ does not preserve $\sigma$, and $\Pol_2 \varrho \not\subseteq \Pol_2 \sigma$.
  
  If $0 \in Z_\sigma$, take any $\tuple{x_1, x_2, \ldots, x_k} \notin \sigma$
  and note that
 \[
    \begin{array}{clll@{\,}l}
      (\; 0, & x_2, & \ldots, & x_k)   &\in \sigma\\
      ( x_1, & 0, & \ldots, & 0\;)     &\in \sigma\\
      \llap{$g:\,$}\downarrow & \downarrow & \ldots & \downarrow\\
      ( x_1, & x_2, & \ldots, & x_k) &\notin \sigma.
    \end{array}
  \]
   Thus, $g$ does not preserve $\sigma$, and $\Pol_2 \varrho \not\subseteq \Pol_2 \sigma$.
   
  Finally, assume that $0 \notin Z_\sigma$ and $1 \notin Z_\sigma$. Since $0 \notin Z_\sigma$
  there exist $x_2$, \ldots, $x_k \in A$ such that $\tuple{0, x_2, \ldots, x_k} \notin \sigma$.
  Take any $c \in Z_\sigma$ and note that $t_{c,c}(c, c) = 0$ and $t_{c,c}(x_i, 1) = x_i$
  since $c < 1$. Therefore,
  \[
    \begin{array}{clll@{\,}l}
      ( c, & x_2, & \ldots, & x_k)   &\in \sigma\\
      ( c, & 1, & \ldots, & 1\;)       &\in \sigma\\
      \llap{$t_{c,c}:\,$}\downarrow & \downarrow & \ldots & \downarrow\\
      ( 0, & x_2, & \ldots, & x_k) &\notin \sigma.
    \end{array}
  \]
   Thus, $t_{c,c}$ does not preserve $\sigma$, and $\Pol_2 \varrho \not\subseteq \Pol_2 \sigma$.
   
  This completes the proof that if $\varrho$ is a bounded partial order and $\sigma$ is a
  central relation then $\Pol_2 \varrho \not\subseteq \Pol_2 \sigma$.

  Now, let $\sigma=R_{\Theta}$ be a regular relation. From \cite[Proposition 4.25]{MPDM3}
  we know that if $\End\varrho \subseteq \End R_\Theta$ where $\varrho$ is a bounded partial
  order, then $\Theta$ has to be a singleton $\Theta = \{\theta\}$. Let $B_1$, \ldots, $B_h$
  be the blocks of $\theta$. One of the $B_i$'s contains $0$, so without loss of generality
  we can assume that $0 \in B_1$. If $1$ is not the only element in its block, we can
  choose $x_2 \in B_2$, \ldots, $x_h \in B_h$ such that $1 \notin \{x_2, \ldots, x_h\}$.
  But then
  \[
    \begin{array}{cllll@{\,}l}
      ( x_2, & x_2, &  x_3, & \ldots, & x_h)   &\in R_\Theta\\
      ( x_2, & 1,   & 1,    & \ldots, & 1\;)     &\in R_\Theta\\
      \llap{$t_{x_2,x_2}:\,$}\downarrow & \downarrow & \downarrow & \ldots & \downarrow\\
      ( 0,   & x_2, & x_3,  & \ldots, & x_h)   &\notin R_\Theta,
    \end{array}
 \]
  so, $\Pol_2 \varrho \not\subseteq \Pol_2 R_\Theta$. If $1$ is the only element in its block,
  without loss of generality we can assume $B_2 = \{1\}$. Take arbitrary
  $x_3 \in B_3$, \ldots, $x_h \in B_h$ and note that
  \[
    \begin{array}{cllll@{\,}l}
      ( x_3, & 1, &  x_3, & \ldots, & x_h)   &\in R_\Theta\\
      ( x_3, & 1,   & 1,    & \ldots, & 1\;)     &\in R_\Theta\\
      \llap{$t_{x_3,x_3}:\,$}\downarrow & \downarrow & \downarrow & \ldots & \downarrow\\
      ( 0,   & 1, & x_3,  & \ldots, & x_h)   &\notin R_\Theta.
    \end{array}
  \]
  Therefore, $\Pol_2 \varrho \not\subseteq \Pol_2 R_\Theta$. This completes the proof that
  $\varrho$ cannot be a bounded partial order if $\Pol_2 \varrho \subseteq \Pol_2 \sigma$.

  \bigskip

  \textsc{Step 2.}
  Let $\varrho$ be a central relation.
  If $\sigma$ belongs to one of the classes \ref{r1}, \ref{r3}, \ref{r4} or if $\sigma$ is
  a unary central relation, then $\End \varrho \not\subseteq \End\sigma$
  (see Table~\ref{table.overview}), and hence $\Pol_2 \varrho \not\subseteq \Pol_2 \sigma$.

  Suppose $\sigma$ is an equivalence relation. According to \cite[Proposition 4.3]{MPDM1},
  from $\End \varrho \subseteq \End \sigma$ it follows that $\ar(\varrho) \in \{1, 2\}$,
  $T_{\varrho}=\emptyset$ and $A / \sigma = \{Z_\varrho, \{a_2\}, \ldots, \{a_t\}\}$,
  i.e.~$Z_\varrho$ is the only nontrivial block of $\sigma$. Since $\sigma$ is a nontrivial
  equivalence relation we have that
  $|Z_\varrho| \ge 2$ and $t \ge 2$. Take $c_1, c_2 \in Z_\varrho$ so that $c_1 \ne c_2$ and
  define $* : A^2 \to A$ by $c_1 * y = c_1$ and $x * y = y$ for $x \ne c_1$.
  Clearly, $* \in \Pol_2 \varrho$. To see that $* \notin \Pol_2 \sigma$, note that
  $\pair{c_1}{c_2} \in \sigma$ and $\pair{a_2}{a_2} \in \sigma$ but
  $\pair{c_1 * a_2}{c_2 * a_2} = \pair{c_1}{a_2} \notin \sigma$. Therefore,
  $\Pol_2 \varrho \not\subseteq \Pol_2 \sigma$.

  Suppose $\sigma$ is a regular relation defined by an $h$-regular family $\Theta$.
  According to \cite[Propositions 4.6 and 4.7]{MPDM1} from $\End \varrho \subseteq \End \sigma$
  it follows that $\Theta = \{\theta\}$, $A / \theta = \{B, \{b_2\},
  \ldots, \{b_h\}\}$, $|B| \ge 2$ and $Z_\varrho \subseteq B$.
  Define $* : A^2 \to A$ by $x * y = y$ if $y \in Z_\varrho$
  and $x * y = x$ otherwise. Then clearly $* \in \Pol_2 \varrho$.
  To see that $* \notin \Pol_2 \sigma$ take arbitrary $c \in Z_\varrho$ and note that
  \[
    \begin{array}{cllll@{\,}l}
      ( b_2,   & b_2, & b_3, & \ldots, & b_h )   &\in \sigma\\
      ( \;c, & b_2, & b_2, & \ldots, & b_2 )   &\in \sigma\\
      \llap{$*:\,$}\downarrow & \downarrow & \downarrow & \ldots & \downarrow\\
      ( \;c, & b_2, & b_3, & \ldots, & b_h )   &\notin \sigma.
    \end{array}
\]

  If $\varrho$ is a unary central relation and $\sigma$ is an at least binary central
  relation, say of arity $k$, then according to
  \cite[Proposition 4.1]{MPDM1}, $Z_\sigma = \varrho$ and $T_{\sigma}=\emptyset$.
  Define $* : A^2 \to A$ by $x * y = y$ if $x \in \varrho$ and $x * y = x$ otherwise.
  Clearly, $* \in \Pol_2 \varrho$. To see that $* \notin \Pol_2 \sigma$ take any
  $c \in \varrho = Z_\sigma$ and any $\tuple{x_1, x_2, \ldots, x_k} \notin \sigma$.
  Then
  \[
    \begin{array}{clll@{\,}l}
      (\;\; c,   & x_2, & \ldots, & x_k)     &\in \sigma\\
      ( x_1, & c,   & \ldots, & c\;\;)     &\in \sigma\\
      \llap{$*:\,$}\downarrow & \downarrow & \ldots & \downarrow\\
      ( x_1, & x_2, & \ldots, & x_k)   &\notin \sigma.
    \end{array}
 \]

  Therefore, if $\Pol_2 \varrho \subseteq \Pol_2 \sigma$ then both $\varrho$ and $\sigma$ have to
  be at least binary central relations.
\end{proof}

At this point it is clear that the only nontrivial containments among $k$-ary parts of maximal clones with $k\geq 2$ can occur for central relations of arity at least 2. This case is studied in detail in the next section.

\section{Rosenberg clones defined by central relations}

To untangle the situation concerning the $k$-ary parts of Rosenberg clones of central relations
we introduce another set of strategies.
Let $\varrho$ be an $n$-ary relation on $A$ and let $\bar a = (a_1,\dots,a_m) \in A^m$. Let us define
the \emph{type of $\bar a$ with respect to $\varrho$} as follows:
\[
  {\type}_{\varrho} (\bar{a}) = {\type}_{\varrho}(a_1,\dots,a_m)=({\tau}_1(\bar{a}),{\tau}_2(\bar{a}))
\]
where
\begin{align*}
{\tau}_1(\bar{a}) =& \{ (i_1,\dots,i_n)\mid i_1<\dots <i_n\in
\{1,\dots ,m\}\text{ and } (a_{i_1},\dots,a_{i_n})\in\varrho\}, \text{ and}\\
{\tau}_2(\bar{a}) =& \{ (i,j)\mid i<j\in \{1,\dots ,m\}, \;
 \text{ and } a_i=a_j\}.
\end{align*}
For $\bar{a}_1,\bar{a}_2\in A^m$ define
\[{\type}_{\varrho} (\bar{a}_1)\cap {\type}_{\varrho} (\bar{a}_2):= ({\tau}_1(\bar{a}_1)\cap {\tau}_1(\bar{a}_2) ,{\tau}_2(\bar{a}_1)\cap {\tau}_2(\bar{a}_2)),\]
and ${\type}_{\varrho} (\bar{a})\subseteq {\type}_{\varrho} (\bar{b})$ if ${\tau}_i (\bar{a})\subseteq \tau_i(\bar{b})$ for $i=1,2$.

\begin{PROP}\label{Polcentralnih}
  Let $\varrho$ be an $n$-ary central relation on $A$ and let $\sigma$
  be any $m$-ary relation on the same set $A$. Then $\Pol_k \varrho \subseteq \Pol_k \sigma$ if and only if the following holds for every
  $\bar{a}_1,\dots ,\bar{a}_k\in \sigma$ and every $\bar{b}\in A^m$:
  \[
  {\type}_{\varrho} (\bar{a}_1)\cap {\type}_{\varrho}
  (\bar{a}_2)\cap\dots \cap {\type}_{\varrho} (\bar{a}_k)\subseteq
  {\type}_{\varrho} (\bar{b})\Rightarrow \bar{b}\in\sigma .
  \]
\end{PROP}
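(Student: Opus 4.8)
The statement is an equivalence and I would establish the two implications separately, dispatching the routine one first to fix notation. For the ($\Leftarrow$) direction, take any $f \in \Pol_k \varrho$ and any $\bar a_1, \dots, \bar a_k \in \sigma$, write $\bar a_j = (a^{(j)}_1, \dots, a^{(j)}_m)$, and let $\bar b$ be the componentwise image, $b_i = f(a^{(1)}_i, \dots, a^{(k)}_i)$. To deduce $\bar b \in \sigma$ from the hypothesis it suffices to verify the two inclusions $\tau_1(\bar a_1) \cap \dots \cap \tau_1(\bar a_k) \subseteq \tau_1(\bar b)$ and $\tau_2(\bar a_1) \cap \dots \cap \tau_2(\bar a_k) \subseteq \tau_2(\bar b)$. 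The first is exactly the fact that $f$ preserves $\varrho$: an index tuple $i_1 < \dots < i_n$ lying in every $\tau_1(\bar a_j)$ means the $k$ columns $(a^{(j)}_{i_1}, \dots, a^{(j)}_{i_n})$ all lie in $\varrho$, so their componentwise $f$-image $(b_{i_1}, \dots, b_{i_n})$ does too. The second holds simply because $f$ is a function: if $a^{(j)}_i = a^{(j)}_{i'}$ for every $j$, then $b_i = b_{i'}$. Applying the hypothesis yields $\bar b \in \sigma$, hence $f \in \Pol_k \sigma$.

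For the converse ($\Rightarrow$) I would argue contrapositively: assuming the type condition fails, there are $\bar a_1, \dots, \bar a_k \in \sigma$ and some $\bar b \notin \sigma$ with $\type_\varrho(\bar a_1) \cap \dots \cap \type_\varrho(\bar a_k) \subseteq \type_\varrho(\bar b)$, and I would build a witness $f \in \Pol_k \varrho \setminus \Pol_k \sigma$. Fixing a central element $c \in Z_\varrho$, I prescribe $f(p_i) = b_i$ on the $m$ points $p_i := (a^{(1)}_i, \dots, a^{(k)}_i) \in A^k$ and set $f \equiv c$ everywhere else. This is well defined precisely because of the $\tau_2$-part of the hypothesis: if $p_i = p_{i'}$ then $(i,i') \in \tau_2(\bar a_j)$ for all $j$, hence $(i,i') \in \tau_2(\bar b)$, i.e.\ $b_i = b_{i'}$. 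By construction $f$ sends $\bar a_1, \dots, \bar a_k$ componentwise to $\bar b \notin \sigma$, so $f \notin \Pol_k \sigma$; everything then reduces to showing $f \in \Pol_k \varrho$.

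The verification that $f$ preserves $\varrho$ is the heart of the argument and the step I expect to require care. Given columns $\bar u_1, \dots, \bar u_k \in \varrho$ I would examine the induced rows $r_1, \dots, r_n \in A^k$ and split into cases along the decomposition $\varrho = C_\varrho \cup R_\varrho \cup T_\varrho$. If some row is not among the $p_i$, the corresponding output coordinate equals $c$, so the output tuple lies in $\varrho$ by centrality of $c$ together with total symmetry; if two rows coincide, or two output values coincide, the output lies in the reflexive part $R_\varrho$. The only delicate case is when every row equals some $p_{i_s}$ and these points are pairwise distinct, which forces the indices $i_1, \dots, i_n$ to be distinct. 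There the assumption $\bar u_j \in \varrho$ reads $(a^{(j)}_{i_1}, \dots, a^{(j)}_{i_n}) \in \varrho$ for every $j$; sorting the indices (legitimate by total symmetry) turns this into membership of the sorted index tuple in every $\tau_1(\bar a_j)$, whence in $\tau_1(\bar b)$ by the type inclusion, so $(b_{i_1}, \dots, b_{i_n}) = (f(r_1), \dots, f(r_n)) \in \varrho$. This gives $f \in \Pol_k \varrho$ and the desired contradiction. The main obstacle is organizing this case analysis so that total reflexivity, total symmetry, and centrality each do exactly the required work, and confirming that the default value $c$ never produces a forbidden tuple — which is automatic since any tuple containing a central coordinate belongs to $\varrho$.
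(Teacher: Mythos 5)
Your proposal is correct and follows essentially the same route as the paper: the same componentwise image argument for ($\Leftarrow$), and for ($\Rightarrow$) the same interpolating operation $f$ (value $b_i$ on the rows $p_i$, a central element $c$ elsewhere) with the same well-definedness and preservation case analysis — your contrapositive framing is only a cosmetic repackaging of the paper's direct argument. If anything, your explicit appeal to total symmetry when sorting the index tuple $(j_1,\dots,j_n)$ before testing membership in $\tau_1$ is a point the paper glosses over.
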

\begin{proof}
  ($\Leftarrow$)  Assume that for every ${\bar a}_1,\dots ,{\bar a}_k\in
  \sigma$ and every $\bar{b}\in A^m$ we have that
  $
  {\type}_{\varrho} ({\bar a}_1)\cap {\type}_{\varrho}
  ({\bar a}_2)\cap\dots \cap {\type}_{\varrho} ({\bar a}_k)\subseteq
  {\type}_{\varrho} (\bar{b})\Rightarrow \bar{b}\in\sigma
  $.

  Take $f\in \Pol_k \varrho$ and ${\bar a}_1,\dots ,{\bar a}_k\in
  \sigma$, say, \begin{center}  \begin{tikzpicture}[
anno/.style={column #1/.style={nodes={text=red, font=\tiny\ttfamily}}},
anno/.list={5}
]
\matrix (m) [matrix of math nodes,  
inner sep=0pt, column sep=0.25em, 
nodes={inner sep=0.25em,text width=2em,align=center}
]
{
\bar{a}_1 & \bar{a}_2 & \cdots & \bar{a}_k\\
\rotatebox[origin=c]{90}{=} & \rotatebox[origin=c]{90}{=} & &\rotatebox[origin=c]{90}{=} \\
a_1^1  &  a_1^2 &  \cdots & a_1^k \\
a_2^1  &  a_2^2 &  \cdots & a_2^k \\
\vdots &  \vdots  &  \ddots & \vdots \\
a_m^1  &  a_m^2 &  \cdots & a_m^k \\
};
\draw (m-3-1.north west) to [square left brace] (m-6-1.south west) ;
\draw(m-3-4.north east) to [square right brace](m-6-4.south east);
\end{tikzpicture} .\end{center} We define $\bar{b}$ in the following way:
  $\bar{b}=f({\bar a}_1,\dots ,{\bar a}_k)$, i.e.\ $b_i=f(a_i^1,\dots,
  a_i^k)$, $i\in\{ 1,\dots ,m\}$. We will show that $\bar{b}\in
  \sigma$.
  According to the assumption, it suffices to show that
  \[
    {\type}_{\varrho} ({\bar a}_1)\cap {\type}_{\varrho}
    ({\bar a}_2)\cap\dots \cap {\type}_{\varrho} ({\bar a}_k)\subseteq
    {\type}_{\varrho} (\bar{b})
  \]
  or, equivalently,
  \begin{gather*}
    {\tau}_{1} ({\bar a}_1)\cap {\tau}_{1} ({\bar a}_2)\cap\dots \cap {\tau}_{1}
    ({\bar a}_k)\subseteq {\tau}_{1} (\bar{b})\\
    \text{ and } {\tau}_{2}
    ({\bar a}_1)\cap {\tau}_{2} ({\bar a}_2)\cap\dots \cap {\tau}_{ 2}
    ({\bar a}_k)\subseteq {\tau}_{2} (\bar{b}).
  \end{gather*}

  For the first inclusion take any $(i_1,\dots ,i_n)\in {\tau}_{1}
  ({\bar a}_1)\cap {\tau}_{1} ({\bar a}_2)\cap\dots \cap {\tau}_{1}
  ({\bar a}_k)$. Then
  $
    (a_{i_1}^1,\dots ,a_{i_n}^1),\dots
    ,(a_{i_1}^k,\dots ,a_{i_n}^k)\in\varrho.
  $
  Since $f\in \Pol_k \varrho$ it follows that
  \[
  \left [ \begin{array}{c} f(a_{i_1}^1,a_{i_1}^2,\dots ,a_{i_1}^k)\\
  f(a_{i_2}^1,a_{i_2}^2,\dots ,a_{i_2}^k) \\ \vdots\\
  f(a_{i_n}^1,a_{i_n}^2,\dots ,a_{i_n}^k)
  \end{array}\right]\in \varrho ,
  \]
  i.e.,  $(b_{i_1},b_{i_2},\dots ,b_{i_n})\in \varrho$, so
  $(i_1,i_2,\dots ,i_n)\in {\tau}_{1} (\bar{b})$.
  
  For the second inclusion let $(i,j)\in {\tau}_{2}
  ({\bar a}_1)\cap {\tau}_{2} ({\bar a}_2)\cap\dots \cap {\tau}_{ 2}
  ({\bar a}_k)$. Then $a_i^l=a_j^l$, $l=1,\dots,k$. It follows that $(i,j)\in {\tau}_{2} (\bar{b})$
  since
  \[
  b_i=f(a_i^1,\dots,a_i^k)=f(a_j^1,\dots,a_j^k)=b_j.
  \]
  Putting it all together, $\bar{b}\in \sigma $ and, therefore, $f\in \Pol_k \sigma$.
  
  ($\Rightarrow $) Assume $\Pol_k \varrho \subseteq \Pol_k \sigma$. Take ${\bar a}_1,\dots ,{\bar a}_k\in \sigma$ and $\bar{b}\in A^m$, say,
  \begin{center}  \begin{tikzpicture}[
anno/.style={column #1/.style={nodes={text=red, font=\tiny\ttfamily}}},
anno/.list={5}
]
\matrix (m) [matrix of math nodes,  
inner sep=0pt, column sep=0.25em, 
nodes={inner sep=0.25em,text width=2em,align=center}
]
{
\bar{a}_1 & \bar{a}_2 & \cdots & \bar{a}_k & & \bar{b}\\
\rotatebox[origin=c]{90}{=} & \rotatebox[origin=c]{90}{=} &  &\rotatebox[origin=c]{90}{=} & & \rotatebox[origin=c]{90}{=}\\
a_1^1  &  a_1^2 &  \cdots & a_1^k & & b_1 \\
a_2^1  &  a_2^2 &  \cdots & a_2^k & & b_2\\
\vdots &  \vdots  &  \ddots & \vdots & \textrm{\normalsize and} & \vdots\\
a_m^1  &  a_m^2 &  \cdots & a_m^k & & b_m \\
};
\draw (m-3-1.north west) to [square left brace] (m-6-1.south west) ;
\draw(m-3-4.north east) to [square right brace](m-6-4.south east);
\draw (m-3-6.north west) to [square left brace] (m-6-6.south west) ;
\draw(m-3-6.north east) to [square right brace](m-6-6.south east);
\end{tikzpicture}
   \end{center}
  such that
  \[
  {\type}_{\varrho} ({\bar a}_1)\cap {\type}_{\varrho}
  ({\bar a}_2)\cap\dots \cap {\type}_{\varrho} ({\bar a}_k)\subseteq
  {\type}_{\varrho} (\bar{b}).
  \]
  We shall now construct an $f\in \Pol_k \sigma$ such that $f({\bar a}_1,\dots
  ,{\bar a}_k)=\bar{b}$, in the following way:
  
  \[
  f(x_1,\dots,x_k )=\left\{
  \begin{array}{ll}
   b_i, &\mbox{ if } (x_1,\dots ,x_k)=(a_i^1,\dots ,a_i^k), i=1,\dots ,m\\
   c, & \mbox{ otherwise,}
  \end{array}\right.
  \]
  where $c\in Z_{\varrho}$.
  Clearly, $f({\bar a}_1,\dots ,{\bar a}_k)=\bar{b}$, so it is left to
  show that $f$ is well defined and that $f\in \Pol_k \varrho$
  (and, therefore, $f\in \Pol_k \sigma$).
  
  To see that $f$ is well defined, suppose that $(a_i^1,\dots,a_i^k)=(a_j^1,\dots,a_j^k)$
  for some $i\not=j$, then $(i,j)\in
  {\tau}_{2} ({\bar a}_1)\cap {\tau}_{2} ({\bar a}_2)\cap\dots \cap
  {\tau}_{ 2} ({\bar a}_k)\subseteq {\tau}_{2} (\bar{b})$, so
  $b_i=b_j$, and $f$ is indeed well defined.
 
  To see that $f\in \Pol_k \varrho$ let ${\bar x}_1,\dots,{\bar x}_k\in \varrho$, where
  $\bar{x}_i=(x_1^i,\dots ,x_n^i)$, $i=1,\dots ,k$. Then
  \[
  f({\bar x}_1,\dots,{\bar x}_k)=\left [ \begin{array}{c} f(x_{1}^1,x_{1}^2,\dots ,x_{1}^k)\\
  f(x_{2}^1,x_{2}^2,\dots ,x_{2}^k) \\ \vdots\\
  f(x_{n}^1,x_{n}^2,\dots ,x_{n}^k)
  \end{array}\right] .
  \]
  If there is $(x_i^1,\dots ,x_i^k)\not=(a_j^1,\dots ,a_j^k)$, for
  some $j\in \{1,\dots ,m\}$, then $f(x_i^1,\dots ,x_i^k)=c$, so
  $f({\bar x}_1,\dots,{\bar x}_k)$ is a tuple that contains a central
  element, and, therefore, it is in $\varrho$.
  
  Otherwise, for each $i\in \{1,\dots,n\}$ $(x_i^1,\dots ,x_i^k)=(a_{j_i}^1,\dots ,a_{j_i}^k)$, for some
  $j_i\in \{1,\dots ,m\}$.
  
  If $(x_{i_1}^1,\dots ,x_{i_1}^k)=(x_{i_2}^1,\dots
  ,x_{i_2}^k)=(a_j^1,\dots ,a_j^n)$, where $i_1,i_2\in \{1,\dots ,n\}$
  and $i_1\not=i_2$, then $f({\bar x}_1,\dots,{\bar x}_k)$ is a reflexive tuple,
  so it belongs to $\varrho$.
  
  If that fails to be true then
  \[
  f({\bar x}_1,\dots,{\bar x}_k)=\left [ \begin{array}{c} f(a_{j_1}^1,a_{j_1}^2,\dots ,a_{j_1}^k)\\
  f(a_{j_2}^1,a_{j_2}^2,\dots ,a_{j_2}^k) \\ \vdots\\
  f(a_{j_n}^1,a_{j_n}^2,\dots ,a_{j_n}^k)
  \end{array}\right] =\left [ \begin{array}{c} b_{j_1}\\b_{j_2}\\ \vdots \\ b_{j_n}\end{array}\right].
  \]
  Since $\bar{x}_i=(a_{j_1}^i,\dots ,a_{j_n}^i)$ and $\bar{x}_i\in
  \varrho$, for $1\leq i \leq n$,  it follows that $(j_1,\dots
  ,j_n)\in {\tau}_{1} ({\bar a}_1)\cap {\tau}_{1} ({\bar a}_2)\cap\dots
  \cap {\tau}_{1} ({\bar a}_k)$, so $(j_1,\dots ,j_n)\in {\tau}_{1}
  (\bar{b})$, so $(b_{j_1},\dots ,b_{j_n})\in \varrho$.
   
  Therefore, $f\in \Pol_k \varrho \subseteq \Pol_k \sigma$, so $\bar{b}\in \sigma$.
\end{proof}

  \begin{LEM}\label{notail}
    Let $\varrho$ and $\sigma$ be two distinct central relations. If $\Pol_k \varrho \subseteq \Pol_k \sigma$ and $T_\varrho=\emptyset$, then $\ar (\varrho ) <\ar (\sigma )$, $Z_{\varrho}=Z_{\sigma}$ and $T_{\sigma}=\emptyset$.
  \end{LEM}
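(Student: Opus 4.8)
The plan rests on two facts. First, the hypothesis $T_\varrho=\emptyset$ gives the explicit description
\[
  (x_1,\dots,x_n)\in\varrho
  \iff
  \text{$x_1,\dots,x_n$ are not pairwise distinct, or } \{x_1,\dots,x_n\}\cap Z_\varrho\neq\emptyset,
\]
so the tuples \emph{outside} $\varrho$ are exactly the injective $n$-tuples over $N:=A\setminus Z_\varrho$; in particular $\varrho\neq A^n$ forces $|N|\ge n$. Second, Proposition~\ref{Polcentralnih} translates the containment into the type condition. Write $n=\ar(\varrho)$ and $m=\ar(\sigma)$. Since $\sigma$ is central, $Z_\sigma\neq\emptyset$ and $\sigma\neq A^m$; the latter forces $m\le|A|$, since otherwise every $m$-tuple would have a repeated coordinate, hence be reflexive and lie in $\sigma$. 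I will also use throughout that embedding a unary $g$ as $f(x_1,\dots,x_k):=g(x_1)$ shows $\End\varrho\subseteq\End\sigma$, and hence $\Aut\varrho\subseteq\Aut\sigma$.

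\textsc{Step 1: $Z_\varrho=Z_\sigma$.} This is the heart of the argument. From the explicit form one checks that a permutation preserves $\varrho$ precisely when it maps $N$ onto itself, i.e.\ $\Aut\varrho=\{\pi\in\operatorname{Sym}(A):\pi(Z_\varrho)=Z_\varrho\}$; on the other hand, every automorphism of a central relation fixes its centre setwise, so $\Aut\sigma\subseteq\{\pi:\pi(Z_\sigma)=Z_\sigma\}$. Feeding these into $\Aut\varrho\subseteq\Aut\sigma$ shows that every permutation stabilising $Z_\varrho$ stabilises $Z_\sigma$; since the only subsets of $A$ fixed setwise by all such permutations are $\emptyset$, $Z_\varrho$, $N$, and $A$, we get $Z_\sigma\in\{Z_\varrho,N\}$. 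The delicate point, and the main obstacle, is excluding $Z_\sigma=N$: permutation groups cannot distinguish a set from its complement, so one needs a genuine non-bijective endomorphism of $\varrho$ that detects the centre. Pick a tuple $\bar w\notin\sigma$; its coordinates are distinct and non-central for $\sigma$, so (if $Z_\sigma=N$) they lie in $A\setminus N=Z_\varrho$. Put $c_0:=w_1\in Z_\varrho$ and let $g$ be the identity on $Z_\varrho$ and constantly $c_0$ on $N$. Then $g\in\End\varrho$, because a $\varrho$-tuple either stays reflexive or keeps a central coordinate. But replacing the first coordinate of $\bar w$ by any $d\in N$ yields a tuple $\bar a\in\sigma$ (it now has a central coordinate of $\sigma$) with $g(\bar a)=\bar w\notin\sigma$, contradicting $g\in\End\sigma$. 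Hence $Z_\sigma=Z_\varrho=:Z$ and $N=A\setminus Z$ is the common set of non-central elements.

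\textsc{Step 2: $T_\sigma=\emptyset$ and the arities.} These follow from one principle: if some $\bar c\in\sigma$ has $\type_\varrho(\bar c)=(\emptyset,\emptyset)$, then putting $\bar a_1=\dots=\bar a_k=\bar c$ in Proposition~\ref{Polcentralnih} gives $\type_\varrho(\bar c)\subseteq\type_\varrho(\bar b)$ for \emph{every} $\bar b\in A^m$, whence $\sigma=A^m$, a contradiction. Apply this three times. If $n>m$ there are no $n$-subsets of $\{1,\dots,m\}$, so $\tau_1\equiv\emptyset$; taking any distinct-coordinate tuple of $\sigma$ (extend a central element, using $m\le|A|$) gives type $(\emptyset,\emptyset)$, a contradiction, so $n\le m$. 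If $\bar t\in T_\sigma$, its coordinates are distinct and lie in $A\setminus Z_\sigma=N$, so every $n$-subtuple is an injective tuple over $N$, hence outside $\varrho$; thus $\type_\varrho(\bar t)=(\emptyset,\emptyset)$, a contradiction, so $T_\sigma=\emptyset$. Finally, if $n=m$ then $\varrho$ and $\sigma$ are central relations of the same arity with $Z_\varrho=Z_\sigma$ and $T_\varrho=T_\sigma=\emptyset$; as such a relation consists of exactly the reflexive tuples together with the distinct tuples meeting the centre, it is determined by its arity and its centre, giving $\varrho=\sigma$ and contradicting distinctness. Therefore $n<m$, completing the proof.
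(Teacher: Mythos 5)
Your proof is correct, but it takes a genuinely different route from the paper: the paper's entire proof consists of the observation that $\Pol_k\varrho\subseteq\Pol_k\sigma$ implies $\End\varrho\subseteq\End\sigma$ (via $f(x_1,\dots,x_k):=g(x_1)$, exactly as you note) followed by a citation of the corresponding statement for endomorphism monoids in \cite{MPDM1}, whereas you prove everything from scratch. Your Step~1 is a self-contained reconstruction of the endomorphism-level argument: the automorphism comparison pins $Z_\sigma$ down to $\{Z_\varrho,\,A\setminus Z_\varrho\}$, and the non-bijective retraction onto the centre correctly rules out the complement (this is the genuinely non-formal part, and your $g$ does use $T_\varrho=\emptyset$ exactly where it must, to ensure $g\in\End\varrho$). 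Your Step~2 then departs further from the cited unary argument by invoking Proposition~\ref{Polcentralnih} with $\bar a_1=\dots=\bar a_k$ a single tuple of empty $\type_\varrho$; since a single tuple already suffices, this is really the $k=1$ instance in disguise, so nothing is lost, and it has the advantage of reusing machinery already set up in the paper rather than an external reference. The trade-off is length versus self-containment: the paper's proof is two lines but opaque without \cite{MPDM1}; yours is longer but makes the lemma independent of that source and makes visible exactly where each hypothesis ($T_\varrho=\emptyset$, $\varrho\neq A^n$, $\sigma\neq A^m$, distinctness) is used. The only cosmetic caveat is that for unary central relations the notions $Z_\sigma$, $R_\sigma$, $T_\sigma$ degenerate (with the convention $Z_\sigma=\sigma$), so a reader may want a sentence confirming your argument survives $m=1$ or $n=1$; it does, and in the paper's intended use (after Proposition~\ref{Pol2.prop.main}) both relations are at least binary anyway.
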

  
  \begin{proof}
    If $\Pol_k \varrho \subseteq \Pol_k \sigma$, then, clearly, ${\End} \varrho \subseteq {\End} \sigma$ and the claim follows from the corresponding lemma for endomorphisms, see \cite[Proposition 4.1]{MPDM1}.
  \end{proof}

  \begin{THM}\label{main-thm}
    Let $\varrho$ and $\sigma$ be two distinct central relations on $A$ such that $T_\varrho=\emptyset$. Then $\Pol_k \varrho \subseteq \Pol_k \sigma$ for $k\geq 2$ if and only if $2k\leq \ar(\varrho)<\ar(\sigma)\leq |A|-1$, $Z_\varrho=Z_\sigma$, and $T_\sigma=\emptyset$.
  \end{THM}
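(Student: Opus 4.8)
The plan is to reduce everything to the combinatorial criterion of Proposition~\ref{Polcentralnih} together with one clean observation: because $T_\varrho=\varnothing$ (and $Z_\varrho=Z_\sigma$ once Lemma~\ref{notail} is invoked), the type of a ``bad'' tuple is as small as possible. Write $n=\ar(\varrho)$ and $m=\ar(\sigma)$, and note first that any $\bar b\in A^m\setminus\sigma$ must consist of pairwise distinct non-central elements: a repeated entry would place $\bar b$ in $R_\sigma$, and a central entry would place it in $C_\sigma$. For such a $\bar b$ we have $\tau_2(\bar b)=\varnothing$, and since every $n$-subtuple of $\bar b$ again consists of distinct non-central elements while $T_\varrho=\varnothing$, no such subtuple lies in $\varrho$, so $\tau_1(\bar b)=\varnothing$ as well; hence $\type_\varrho(\bar b)=(\varnothing,\varnothing)$. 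Thus by Proposition~\ref{Polcentralnih} the containment $\Pol_k\varrho\subseteq\Pol_k\sigma$ is equivalent to: for all $\bar a_1,\dots,\bar a_k\in\sigma$ one has $\bigcap_{l}\tau_1(\bar a_l)\neq\varnothing$ or $\bigcap_l\tau_2(\bar a_l)\neq\varnothing$. I would also record the two ``free'' facts: Lemma~\ref{notail} already gives $n<m$, $Z_\varrho=Z_\sigma$ and $T_\sigma=\varnothing$ whenever the containment holds, while $m\le|A|-1$ holds automatically for the central relation $\sigma$ (a tuple witnessing $\sigma\neq A^m$ requires $m$ distinct non-central elements alongside a central one).

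For the direction ($\Leftarrow$), assume $2k\le n<m$, $Z_\varrho=Z_\sigma$ and $T_\sigma=\varnothing$, and fix $\bar a_1,\dots,\bar a_k\in\sigma$. Since $T_\sigma=\varnothing$, each $\bar a_l$ lies in $C_\sigma\cup R_\sigma$, so it admits a ``certificate'' $S_l\subseteq\{1,\dots,m\}$ with $|S_l|\le2$: either a single position carrying a central element ($\bar a_l\in C_\sigma$), or a pair of positions carrying equal entries ($\bar a_l\in R_\sigma$). Then $\bigl|\bigcup_lS_l\bigr|\le2k\le n<m$, so I can choose an index set $I$ with $\bigcup_lS_l\subseteq I\subseteq\{1,\dots,m\}$ and $|I|=n$. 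For every $l$ the $n$-subtuple of $\bar a_l$ indexed by $I$ contains a central element (w.r.t.\ $\varrho$, using $Z_\varrho=Z_\sigma$) or a repeated entry, since $S_l\subseteq I$; hence it lies in $\varrho$, i.e.\ $I\in\bigcap_l\tau_1(\bar a_l)$. By the reformulated criterion this proves $\Pol_k\varrho\subseteq\Pol_k\sigma$.

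For the direction ($\Rightarrow$), Lemma~\ref{notail} and the remark above leave only the inequality $2k\le n$ to be established, and I argue by contradiction: suppose $n<2k$. Using $n<m\le|A|-1$ there are at least $m$ distinct non-central elements, and $n+1\le\min(2k,m)$, so I can pick two-element sets $P_1,\dots,P_k\subseteq\{1,\dots,m\}$, not all equal, whose union has size exactly $n+1$ (cover $n+1$ positions by $\lceil(n+1)/2\rceil\le k$ pairs and repeat one of them for any leftover indices). For each $l$ let $\bar a_l$ consist of pairwise distinct non-central entries except that the two positions in $P_l$ carry one common non-central value; then $\bar a_l\in R_\sigma\subseteq\sigma$, and because $T_\varrho=\varnothing$ and $\bar a_l$ has no central entry, an $n$-subtuple of $\bar a_l$ lies in $\varrho$ exactly when its index set contains the unique repeat $P_l$. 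Hence $\tau_1(\bar a_l)=\{I:P_l\subseteq I\}$ and $\tau_2(\bar a_l)=\{P_l\}$, giving $\bigcap_l\tau_1(\bar a_l)=\{I:\bigcup_lP_l\subseteq I\}=\varnothing$ (as $|\bigcup_lP_l|=n+1>n$) and $\bigcap_l\tau_2(\bar a_l)=\varnothing$ (the $P_l$ are not all equal). Together with any $\bar b\in A^m\setminus\sigma$, whose type is $(\varnothing,\varnothing)$, this violates the criterion of Proposition~\ref{Polcentralnih}, contradicting $\Pol_k\varrho\subseteq\Pol_k\sigma$; therefore $2k\le n$.

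The main obstacle I expect is the bookkeeping in the ($\Rightarrow$) construction: one must choose the repeat-patterns $P_l$ so that their union strictly exceeds $n$ (forcing $\bigcap_l\tau_1=\varnothing$) while not all coinciding (forcing $\bigcap_l\tau_2=\varnothing$), and simultaneously verify that $\tau_1(\bar a_l)$ is \emph{exactly} $\{I:P_l\subseteq I\}$ — the last point is where the hypothesis $T_\varrho=\varnothing$ is essential, as it is what prevents ``accidental'' all-distinct non-central subtuples from landing in $\varrho$. The only genuinely special situations are the degenerate small-arity cases (for instance $n=1$, where every $\tau_1(\bar a_l)$ is automatically empty, or $m$ too small to host two distinct pairs), which have to be checked separately but reduce to the same idea.
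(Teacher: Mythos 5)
Your proof is correct and follows the same overall strategy as the paper: both directions are reduced to the combinatorial criterion of Proposition~\ref{Polcentralnih}, the ($\Leftarrow$) direction via the observation that the at most $2k$ ``certificate'' positions of $\bar a_1,\dots,\bar a_k$ fit inside an $n$-element index set, and the ($\Rightarrow$) direction via $k$ tuples of $R_\sigma$ whose repeat-patterns force the type intersection to be $(\emptyset,\emptyset)$. Two things you do differently are worth noting. First, you reformulate the criterion upfront (every $\bar b\notin\sigma$ has $\type_\varrho(\bar b)=(\emptyset,\emptyset)$, so containment is equivalent to all type intersections being nontrivial); the paper instead carries the tuple $\bar b$ through both directions explicitly. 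Second, and more substantively, in the ($\Rightarrow$) direction you allow the pairs $P_1,\dots,P_k$ to \emph{overlap} so that their union has size exactly $n+1$; the paper uses disjoint pairs $\{2i-1,2i\}$, which cover $n+2$ positions when $n$ is even and therefore do not fit when $m=n+1$ --- this forces the paper into a separate case with a special tuple $(b_1,\dots,b_{m-1},c)$ ending in a central element (precisely the case the acknowledgements say a referee had to supply). Your overlapping-pairs construction handles all $n\ge 2$ uniformly and is a genuine simplification. One small inaccuracy: your parenthetical claim that for $n=1$ ``every $\tau_1(\bar a_l)$ is automatically empty'' is wrong (a unary $\varrho$ satisfies $Z_\varrho=\varrho$, so $\tau_1(\bar a)$ is nonempty whenever $\bar a$ has a central entry), and for $n=1$, $m=2$ your construction genuinely fails since all $P_l$ coincide; but this degenerate case is excluded in the paper as well by the discussion following Proposition~\ref{Pol2.prop.main} (the containment $\Pol_k\varrho\subseteq\Pol_k\sigma$ descends to $\Pol_2$, where unary $\varrho$ is ruled out), so it does not affect the substance of your argument.
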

  \begin{proof}
 
  Let $\varrho\subseteq A^n$ and $\sigma\subseteq A^m$ be distinct central relations.
  
   $(\Leftarrow)$ Assume that $Z_\varrho = Z_\sigma$, and $T_\varrho=T_\sigma = \emptyset$, $2k\leq n<m$. We will show that $\Pol_k \varrho \subseteq \Pol_k \sigma$ using the criterion from Proposition~\ref{Polcentralnih}.
  
  Let $\bar{a}_1,\bar{a}_2,\dots, \bar{a}_k\in \sigma$, say,
\begin{center}  \begin{tikzpicture}[
anno/.style={column #1/.style={nodes={text=red, font=\tiny\ttfamily}}},
anno/.list={5}
]
\matrix (m) [matrix of math nodes,  
inner sep=0pt, column sep=0.25em, 
nodes={inner sep=0.25em,text width=2em,align=center}
]
{
\bar{a}_1 & \bar{a}_2 & \cdots & \bar{a}_k\\
\rotatebox[origin=c]{90}{=} & \rotatebox[origin=c]{90}{=} &  &\rotatebox[origin=c]{90}{=} \\
a_1^1  &  a_1^2 &  \cdots & a_1^k \\
a_2^1  &  a_2^2 &  \cdots & a_2^k \\
\vdots &  \vdots  &  \ddots & \vdots \\
a_m^1  &  a_m^2 &  \cdots & a_m^k \\
};
\draw (m-3-1.north west) to [square left brace] (m-6-1.south west) ;
\draw(m-3-4.north east) to [square right brace](m-6-4.south east);
\end{tikzpicture} .
\end{center}  

Let $\bar{b}\in A^m$ such that ${\type}_{\varrho} (\bar{a}_1)\cap {\type}_{\varrho}
  (\bar{a}_2)\cap\dots \cap {\type}_{\varrho} (\bar{a}_k)\subseteq
  {\type}_{\varrho} (\bar{b})$. According to Proposition~\ref{Polcentralnih} we have to show that $\bar{b}\in \sigma$. Note that if $\tau_2(\bar{b})\neq \emptyset$, then $\bar{b}\in \sigma$, and we are done. So suppose that $\tau_2(\bar{b})=\emptyset$. Let $J:=\{j\in \{1,\dots,k\}\mid \bar{a}_j\in C_\sigma\}$, $L:=\{l\in \{1,\dots,k\}\mid \bar{a}_l\in R_\sigma\}$. For each $j\in J$, choose some index $r_j\in \{1,\dots,m\}$, such that $a_{r_j}^j\in Z_\sigma$. Furthermore, for each $l\in L$ choose indices $t_l<s_l\in \{1,\dots,m\}$, such that $a_{t_l}^l=a_{s_l}^l$. Let $P:=\{r_j\mid j\in J\}\cup \{t_l\mid l\in L\}\cup \{s_l\mid l\in L\}$. Note that $|P|\leq 2k$. Hence, we can find indices $1\leq i_1<i_2<\cdots <i_n\leq m$ such that $P\subseteq \{i_1,\dots,i_n\}$. It follows that $(i_1,\dots,i_n)\in {\tau}_{1} (\bar{a}_1)\cap {\tau}_{1}
  (\bar{a}_2)\cap\dots \cap {\tau}_{1} (\bar{a}_k)\subseteq
  {\tau}_{1} (\bar{b})$, so $(b_{i_1},\dots,b_{i_n})\in \varrho$. Since $T_{\varrho}=\emptyset$, it follows that $(b_{i_1},\dots,b_{i_n})\in C_\varrho$. So for some $j\in\{1,\dots,n\}$ we have $b_{i_j}\in Z_\varrho=Z_\sigma$. But this implies $\bar{b}\in C_\sigma\subseteq \sigma$.

  $(\Rightarrow)$
  By Lemma~\ref{notail}, we obtain immediately that $T_\sigma=\emptyset$, $Z_{\varrho}=Z_{\sigma}$ and $n<m\leq |A|-1$, so it is left to show that $2k\leq n$. Suppose that $n<2k$. We will show that then there exist $\bar{a}_1,\dots ,\bar{a}_k\in \sigma$ such that ${\type}_{\varrho} (\bar{a}_1)\cap {\type}_{\varrho}
  (\bar{a}_2)\cap\dots \cap {\type}_{\varrho} (\bar{a}_k)=(\emptyset,\emptyset)$. If we succeed in this endeavor, then Proposition~\ref{Polcentralnih} implies $\sigma=A^m$, a contradiction.  
  
 It remains to construct $\bar{a}_1,\dots ,\bar{a}_k$. Let $\bar{b}\in A^m$ be such that $\type_{\varrho} (\bar{b})=(\emptyset,\emptyset)$. As $T_\varrho=\emptyset$ and $Z_\varrho=Z_\sigma$, any element from $A^m\setminus \sigma$ will do.
  
  The $m$-tuples $\bar{a}_1,\bar{a}_2,\dots ,\bar{a}_{\lfloor\frac{n}{2}\rfloor},\bar{a}_{\lfloor\frac{n}{2}\rfloor +1}$ are constructed using elements from $\{b_1,\dots ,b_m\}$ as entries. In case that $n$ is odd or $m> n+1$, we define
  \begin{equation}\tag{$\ast$}\label{defai}
  \bar{a}_i:=(\dots, \underset{\substack{\uparrow\\a_{2i-1}^i}}{b_i},\underset{\substack{\uparrow\\a_{2i}^i}}{b_i},\dots),\qquad i=1,2,\dots ,\bigg\lfloor\frac{n}{2}\bigg\rfloor +1,
  \end{equation} where all other entries are distinct and from the set $\{b_1,\dots,b_m\}\setminus \{b_i\}$.
  Otherwise, if $n$ is even and $m=n+1$, then, for all $i\in\{1,\dots,\frac{n}{2}\}$ we define $\bar{a}_i$ as above in \eqref{defai}. Moreover, we define
   \[
  \bar{a}_{\frac{n}{2}+1}:=(b_1,\dots,b_{m-1},c),
  \] where $c\in Z_{\sigma}(=Z_{\varrho})$.
  Note that since $n<2k$, it follows that $\lfloor\frac{n}{2}\rfloor +1\leq k$. If $\lfloor \frac{n}{2}\rfloor+1<k$ the remaining tuples $\bar{a}_i$, for $\lfloor \frac{n}{2}\rfloor+2\leq i\leq k$ we choose arbitrarily.
  
  Observe that then 
  \[
  \bigcap_{i=1}^{\lfloor\frac{n}{2}\rfloor +1} {\type}_{\varrho} (\bar{a}_i)\supseteq \bigcap_{i=1}^k {\type}_{\varrho} (\bar{a}_i).
  \]
  
  Let us compute $\bigcap_{i=1}^{\lfloor\frac{n}{2}\rfloor +1} {\type}_{\varrho} (\bar{a}_i)$.
  
  It is clear that 
  \[
  \bigcap_{i=1}^{\lfloor\frac{n}{2}\rfloor +1} {\tau}_{2} (\bar{a}_i)=\emptyset.
  \]
  We will show that the same holds for $\bigcap_{i=1}^{\lfloor\frac{n}{2}\rfloor +1} {\tau}_{1} (\bar{a}_i)$.
  
  Let us first treat the case when $n$ is odd or $m> n+1$.
  Suppose $(j_1,\dots,j_n)\in \bigcap_{i=1}^{\lfloor\frac{n}{2}\rfloor +1} {\tau}_{1} (\bar{a}_i)$. Then for each $i\in \{1,\dots,\lfloor\frac{n}{2}\rfloor+1\}$, since $(j_1,\dots,j_n)\in \tau_1(\bar{a}_i)$, we have that $\{2i-1,2i\}\subseteq \{j_1,\dots,j_n\}$. It follows that $|\{j_1,\dots,j_n\}|\geq 2\cdot (\lfloor \frac{n}{2}\rfloor+1)>n$, a contradiction.
  
  Hence, 
  \begin{equation}\tag{$\ast\ast$}\label{presek}
  \bigcap_{i=1}^{\lfloor\frac{n}{2}\rfloor +1} {\type}_{\varrho} (\bar{a}_i)=(\emptyset,\emptyset).
  \end{equation}
  
  In case that $n$ is even and $m=n+1$, we argue as follows:
  
  Note that every tuple from $\tau_1(\bar{a}_{\frac{n}{2}+1})$ contains $m$ as an entry. Suppose that $\bigcap_{i=1}^{\frac{n}{2} +1} {\tau}_{1} (\bar{a}_i)\neq \emptyset$. Then it contains a tuple $(i_1,\dots,i_{n-1},m)$, where $i_1<\cdots<i_{n-1}<m$. Since every tuple from $\tau_1(\bar{a}_i)$, $i=1,\dots,\frac{n}{2}$ has to contain entries $2i-1$ and $2i
  $, it follows that $\{1,\dots,n\}=\bigcup_{i=1}^{\frac{n}{2}}\{2i-1,2i\}\subseteq \{i_1,\dots,i_{n-1}\}$ --- a contradiction. Hence, \eqref{presek} holds in this case, too.
 
 Altogether we proved
  \[
  \bigcap_{i=1}^{k} {\type}_{\varrho} (\bar{a}_i)=(\emptyset,\emptyset).
  \]
  It follows that $\sigma=A^m$, which is a contradiction.
  \end{proof}
  
  \begin{COR}
    For each $k\geq 2$ the height of the poset of $k$-ary parts of maximal clones on a set $A$ with $|A| \ge 2k+1$ is at least $|A| - 2k$.
  \end{COR}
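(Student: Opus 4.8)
The plan is to exhibit an explicit chain with $|A|-2k$ elements inside the poset, built entirely from central relations sharing a single fixed central element and having empty tail. Fix any element $c\in A$ and, for each integer $n$ with $2k\le n\le |A|-1$, let $\varrho_n\subseteq A^n$ be the set of all $n$-tuples that either have a repeated coordinate or contain $c$ among their coordinates; equivalently, $\varrho_n = R_{\varrho_n}\cup C_{\varrho_n}$ with $T_{\varrho_n}=\emptyset$ and (as I will verify) $Z_{\varrho_n}=\{c\}$.

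First I would check that each $\varrho_n$ really is a central relation, so that $\Pol\varrho_n$ is a maximal clone by Rosenberg's theorem. Total reflexivity and total symmetry are immediate from the definition, and $c$ is a central element, since any tuple having $c$ as a coordinate is in $\varrho_n$ by construction. The two points that genuinely use the arity bound are the following: $\varrho_n\ne A^n$, which holds because $n\le|A|-1$ lets us choose $n$ pairwise distinct coordinates all different from $c$, producing a tuple outside $\varrho_n$; and $Z_{\varrho_n}=\{c\}$ exactly, which holds because for any $d\ne c$ one can again (using $n\le|A|-1$) form a tuple $(d,x_2,\dots,x_n)$ of pairwise distinct non-central entries avoiding $c$, witnessing that $d$ is not central. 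Hence all the $\varrho_n$ share the same central set $Z=\{c\}$ and satisfy $T_{\varrho_n}=\emptyset$.

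With this family in hand, the chain is produced by a direct appeal to Theorem~\ref{main-thm}. Any two of the relations are distinct central relations with empty tail and the same central set, so the theorem applies and yields $\Pol_k\varrho_n\subseteq\Pol_k\varrho_{n'}$ precisely when $2k\le n<n'\le|A|-1$. In particular
\[
  \Pol_k\varrho_{2k}\subseteq\Pol_k\varrho_{2k+1}\subseteq\cdots\subseteq\Pol_k\varrho_{|A|-1},
\]
and each inclusion is strict: an equality $\Pol_k\varrho_n=\Pol_k\varrho_{n'}$ with $n<n'$ would, applying the same characterization to the pair in the opposite order, force $n'<n$, a contradiction. Since the admissible arities $2k,2k+1,\dots,|A|-1$ number exactly $|A|-2k$, this is a chain of $|A|-2k$ distinct elements of the poset, whence its height is at least $|A|-2k$.

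I expect the only delicate point to be the bookkeeping around the bound $n\le|A|-1$, which is used twice: to guarantee $\varrho_n\ne A^n$ (so that $\varrho_n$ is a bona fide Rosenberg relation and $\Pol\varrho_n$ is maximal) and to pin down $Z_{\varrho_n}=\{c\}$ (which is what lets Theorem~\ref{main-thm} relate consecutive members of the family). Once these verifications are recorded, the chain and the resulting height bound are immediate, as all of the substantive work has already been carried out in the theorem.
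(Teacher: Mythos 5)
Your proposal is correct and follows essentially the same route as the paper: fix a central element $c$, take the family of central relations with $Z=\{c\}$, empty tail, and arities ranging over $2k,\dots,|A|-1$, and invoke Theorem~\ref{main-thm} to obtain a strictly increasing chain of length $|A|-2k$. The only difference is that you explicitly construct these relations and verify they are bona fide central relations (using $n\le|A|-1$), whereas the paper simply asserts their existence; this is a welcome but inessential elaboration.
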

  \begin{proof}
    Fix a $c \in A$ and consider a sequence of central relations
    $\varrho_i$, $i \in \{0, \ldots, |A| - 2k - 1\}$ such that
    $\ar(\varrho_i) = 2k + i$, $Z_{\varrho_i} = \{c\}$ and $T_{\varrho_i}=\emptyset$. Then, by Theorem~\ref{main-thm} we have that
   \[
      \Pol_k \varrho_0 \subseteq \Pol_k \varrho_1 \subset \ldots \subset \Pol_k \varrho_{|A|-2k-1}.
   \]
    This concludes the proof.
  \end{proof}

  \section{Acknowledgements}
  
  We thank to anonymous referees for their helpful comments. Special acknowledgements go to one of the referees that pointed out a missing case in the proof of Theorem~\ref{main-thm}, and suggested a way to fill this gap.

  The authors gratefully acknowledge the financial support of the Ministry of Science, Technological Development and Innovation of the Republic of Serbia (Grant No. 451-03-47/2023-01/200125).


\subsection*{Declarations}

\paragraph{Conflict of interest:} The authors declare that they have no relevant financial or non-financial interests to disclose.

\end{document}